\newtheorem{defn}{Definition}[section]
\newtheorem{prop}[defn]{Proposition}
\newtheorem{lem}[defn]{Lemma}
\newtheorem{thm}[defn]{Theorem}
\newtheorem{cor}[defn]{Corollary}
\newtheorem {conj}[defn]{Conjecture}
\newtheorem{claim}[defn]{Claim}
\newcommand {\C}{{\mathds C}}
\newcommand {\Q}{{\mathds Q}}
\def\mod{\operatorname{mod}}
\def\dim{\operatorname{dim}}
\title{ A base change version of Rasmussen-Tamagawa Conjecture}
\begin{document}
	
	\author{Plawan Das}

	\address{Chennai Mathematical Institute, H1 Sipcot IT Park, Siruseri, Kelambakkam 603 103, India}  
	\email{d.plawan@gmail.com, plawan@cmi.ac.in}
	
	\author{Subham Sarkar}
	
	\address{School of Mathematical Sciences, National Institute of Science Education and Research, Bhubaneswar, Khurda 752050, India.}  \email{subham.sarkar13@gmail.com, subham13@niser.ac.in}

	\subjclass{Primary 11F80; Secondary 11G05, 11G10}
	
	\begin{abstract} 
		We prove a certain uniform version of the Shafarevich Conjecture. As a corollary, we prove the Rasmussen-Tamagawa Conjecture for a particular class of abelian varieties $A$  defined over a number $K$ of dimension $g$ having everywhere potential good reduction, in particular, for any finite place $v$ of $K$ the localization $A_v:=A\times_{\mathrm{Spec}(K)}\mathrm{Spec}(K_v)$ has either good reduction or {\it totally bad reduction} (connected component $\tilde{\mathcal{A}}_v^0$ of  the special fibre $\tilde{\mathcal{A}}_v$ of the N\'eron model $\mathcal{A}_v$ at $v$ is an affine group scheme over the residue field $k_v$ at $v$) and has good reduction over a quadratic extension of $K_v$.

	\end{abstract}
	
	\baselineskip=12pt
	\maketitle

	\section{Introduction}
	
	Let $K$ be a number field, $S$ be a finite set of  places of $K$ containing the archimedean places and $g$ be a positive integer. Let $\bar{K}$ be an algebraic closure of $K$ and $G_K$ denote the absolute Galois group of $K$.  For a finite place $v$ of $K$, let $K_v$ denote the completion  of $K$ at $v$ and $k_v$ denote the residue field at $v$. Let $A/K$ denote an abelian variety defined over $K$ and  $A_v=A\times_{\mathrm{Spec}(K)}\mathrm{Spec}(K_v)$ denote the localization of $A$ at $v$. Let $\mathcal{A}_v$ denote the N\'eron model of $A$ at $v$. Let  $\tilde{\mathcal{A}}_v$ be the special fibre of  the N\'eron model $\mathcal{A}_v$ and  $\tilde{\mathcal{A}}_v^0$ be the connected component of identity of $\tilde{\mathcal{A}}_v$. Recall that an abelian variety $A$ defined over $K$ is said to have {\it totally bad reduction} at $v$ if $\tilde{\mathcal{A}}_v^0$ is an affine group scheme over $k_v$ (see \cite{DR1}) and $A$ is said to have {\em purely additive reduction} if $\tilde{\mathcal{A}}_v^0$ is a unipotent group scheme. Note that when $A$ is an elliptic curve totally bad reduction is the same as bad reduction and purely additive reduction is the same as additive reduction.
	
	By Shafarevich Conjecture, proved by Faltings \cite{Fa83}, the $K$-isomorphism classes of abelian varieties $A/K$  of dimension $g$ having good reduction outside $S$ is finite. In \cite{DR23}, a uniform version of the Shafarevich conjecture is studied, namely, the finiteness of $\bar{K}$-isogeny classes of such abelian varieties allowing a certain type of bad reduction outside $S$. For any finite set of places $S'$ of $K$ containing $S$, let $\mathcal{X}(K,g,S')$ and $\bar{\mathcal{X}}(K,g,S')$  denote the $K$-isomorphism classes and $\bar{K}$-isomorphism classes of the abelian varieties $A/K$ of dimension $g$ respectively,  satisfying the following: 
	\begin{itemize}
		\item[(a)] for any place $v\notin S'$, $A$ has good reduction at $v$,
		\item[(b)] for any finite place $v\in S'\setminus S$, $A_v$ has either good reduction or totally bad reduction and acquires good reduction over a quadratic extension of $K_v$.
		% for a  place $v\in S'\setminus S$, the inertia at $v$ acts via scalars $\{\pm 1\}$ on the $\ell$-adic Tate module $V_\ell(A)$ at any $\ell$ coprime to the residue characteristic at $v$. 
	\end{itemize}
	For an abelian variety $A/K$ of dimension $g$, let $\rho_{A,\ell}: G_K\rightarrow GL(V_\ell(A))$ denote the $\ell$-adic  representation associated to the $\ell$-adic Tate module $V_\ell(A)$ of $A$ which is of dimension $2g$. Note that, if $A_v$ has totally bad reduction and acquires good reduction over a quadratic extension of $K_v$, then  for any rational prime $\ell$ coprime to the residue characteristic of $K_v$, the inertia group $I_v$ at $v$ acts via scalars $\{\pm1\}$ on the Tate module $V_\ell(A_v)$ (see \cite[Corollary 7]{DR1}, Proposition \ref{DR1_cor7}).

	By Faltings' finiteness theorem \cite{Fa83}, we know that the set $\mathcal{X}(K,g,S')$ is finite. Hence  $\bar{\mathcal{X}}(K,g,S')$ is also finite. Further, for any abelian variety $A$ defined over a number field $K$, there are only a finite number of $\bar{K}$-isomorphism classes of abelian varieties $B$ defined over $K$ and $\bar{K}$-isogenous to $A$ (see Bost \cite[Corollary 3.2]{Bos96}). Hence \cite{DR23} gives that  for fixed $K,g$ and $S$, the union
	$$\bar{\mathcal{Y}}(K,g,S):=\bigcup_{S'\supseteq S}\bar{\mathcal{X}}(K,g,S')$$
	is finite (see Proposition \ref{fin_pot__isom}). Note that all abelian varieties $A$ satisfying conditions $(a)$ and $(b)$ have potential good reduction outside $S$, but there may not exist a uniform extension $L$ of $K$ such that $A$ have good reduction outside  places of $L$ lying above $S$. In fact, in \cite{DR23}, the authors proved that, up to potential equivalence, the collection of pure semisimple continuous $\ell$-adic representations  $\rho:G_K\rightarrow GL_n(\mathbb{Q}_\ell)$ such that the inertia at places outside $S$ acts by a scalar matrix is finite. Then using the isogeny theorem of Tate-Zarhin-Faltings (\cite{Fa83}): $\text{Hom}(A,B)\otimes \Q_{\ell}\simeq  \text{Hom}_{G_K}(V_{\ell}(A), V_{\ell}(B))$, for abelian varieties $A, B$ over $K$ of dimension $g$, they achieve the finiteness of $\bar{K}$-isogeny classes of abelian varieties.
	
	For any rational prime $\ell$, if the only finite places in $S$ are those lying above $\ell$, then we denote $\bar{\mathcal{Y}}(K,g,S)$ by $\bar{\mathcal{Y}}(K,g,\ell)$. Here we are interested in the finiteness of the union of a particular subcollection $\bar{\mathscr{M}}(K,g,\ell)$ of $\bar{\mathcal{Y}}(K,g,\ell)$ varying $\ell$ over all rational primes which is motivated by a conjecture of Rasmussen and Tamagawa \cite{RT08}. The conjecture is widely open and here we prove a special case of it.

	For any positive integer $n$, let $A[\ell^n]$ denote the group of $\ell^n$-torsion elements of $A$ defined over $\bar{K}$ and $K(\mu_{\ell})$ be  the $\ell$-th cyclotomic extension of $K$.
	Motivated by a question of Ihara \cite{Ih86}, Rasmussen and Tamagawa considered the set of $K$-isomorphism classes $[A]$ of abelian varieties $A/K$,
	\begin{align*}
		\mathcal{A}(K,g,\ell):=\{[A]: \dim A=g,~ A~\text{has good reduction outside}~\ell~& \text{and}\\~ K(A[\ell])/K(\mu_\ell)~ \text{is an}~ \ell\text{-extension}  \}.
	\end{align*}
	Given $K, g, \ell$, by Faltings' finiteness theorem, we have that $\mathcal{A}(K,g,\ell)$ is finite. They conjectured the following: 
	
	\begin{conj}[Rasmussen-Tamagawa Conjecture]\cite{RT08,RT16}\label{co1}
		For a number field $K$ and a positive integer $g$, the set  $\mathcal{A}(K,g,\ell)$ is empty for sufficiently large $\ell$. In particular, $$\mathcal{A}(K,g):= \bigcup_{\ell} \mathcal{A}(K,g,\ell)$$ is finite.
	\end{conj}
	The conjecture above can also be viewed as a uniform version of the Shafarevich Conjecture. Rasmussen-Tamagawa gave a proof of the conjecture for $K=\mathbb{Q}$ and $g=1$ (\cite[Theorem 2]{RT08}), $g=2, 3$ (\cite[Theorem 7.1 and Theorem 7.2]{RT16}). They further stated a uniform version of the Conjecture (\cite[Conjecture 2]{RT16}), in particular they conjectured that for any $g$ and any $n$, there exists a constant $N=N(g,n)>0$ such that $ \mathcal{A}(K,g,\ell)$ is empty for any $K$ with $[K:\mathbb{Q}]=n$ for   $\ell>N$ and assuming the Generalized Riemann Hypothesis they proved it when $n$ is odd. For any $n$, Bourdon \cite[Corollary 1]{Bo15} and Lombardo \cite[Theorem 1.4]{Lo18} proved the uniform version of the conjecture for CM elliptic curves and abelian varieties of CM type (i.e. $\mathrm{End}_{\bar{K}}(A)\otimes\mathbb{Q}$ contains an \'etale $\mathbb{Q}$-algebra of dimension $2g$) respectively. Further Arai \cite{Ar14} explored the Conjecture for QM-abelian surfaces.  In \cite[Corollary 1.4]{Oz13}, Ozeki proved the conjecture for  abelian varieties $A/K$ when the image of $\rho_{A,\ell}$ is abelian. For fixed $K,g$, Rasmussen-Tamagawa (\cite[Theorem 3.6]{RT16}, also see Ozeki \cite[Corollary 4.5]{Oz11}) further proved that  $$\mathcal{A}^{\mathrm{ss}}(K,g):= \bigcup_\ell \mathcal{A}^{\mathrm{ss}}(K,g,\ell)$$ is finite, where $\mathcal{A}^{\mathrm{ss}}(K,g,\ell)$ denote the subset of $ \mathcal{A}(K,g,\ell)$ containing only the $K$-isomorphism classes of abelian varieties $A/K$ with everywhere semistable reduction over $K$. 
	
	We focus on a very special class of abelian varieties $A\in \mathcal{A}(K,g,\ell)$  which have potential semistable reduction at places lying above $\ell$. In particular, we considered a collection $\mathcal{Z}(K,g)$ of abelian varieties $A$ defined over a number field $K$ of dimension $g$ having everywhere potential good reduction as defined below which satisfies the hypothesis of the Rasmussen-Tamagawa conjecture after a base chance to $K(A[12])$; and prove a particular uniform version of the Shafarevich Conjecture which we call a base change version of the Rasmussen-Tamagawa conjecture. 
	
	\noindent	\begin{defn}
		For any number field $K$ and a positive integer $g$, let $\mathcal{Z}(K,g)$ be the set of $K$-isomorphism classes  of abelian varieties $A/K$ of dimension $g$ such that for any finite place $v$ of $K$,  $A_v$ has either good reduction, or totally bad reduction  and acquires good reduction over a quadratic extension of $K_v$. So abelian varieties in $\mathcal{Z}(K,g)$ have everywhere potential good reduction.
	\end{defn}

	\begin{defn}\label{def:M} For any number field $K$, a rational prime $\ell$, and a positive integer $g$,
		let $\mathscr{M}(K,g,\ell)$ be a subset of $\mathcal{Z}(K,g)$ consisting of $K$-isomorphism classes of those abelian varieties $A/K$ for which $K_A(A[\ell])$ is an $\ell$-extension of $K_A(\mu_\ell)$, where $K_A=K(A[12])$.	Let $\bar{\mathscr{M}}(K,g,\ell)$ denotes the $\bar{K}$-isomorphism classes of such abelian varieties.
	\end{defn}
	
		Note that if a smooth proper variety $X/K$ has semistable reduction at $u$ dividing $\ell$ then the $\ell$-adic cohomology group $H^r_{\text{\'{e}t}}(X_{\bar{K}},\mathbb{Q}_\ell)$ and its dual are `semistable' at $u$ (see \cite[\S 5.1.4]{Fo94}). This is a consequence of Fontaine-Janssen conjecture (\cite[Conj. 6.2.7]{Fo94}) proved by Tsuji \cite{Ts99}. For an abelian variety $A/K$, by Raynaud's criteria $A$ has semistable reduction everywhere over $K(A[12])$ (\cite[Proposition 4.7]{SGA}).   
	
		For a positive integer $m$, here we introduce the notion of $m$-compatible system for a collection of $\ell$-adic representations, generalize a structure theorem due to Ozeki \cite{Oz11} for compatible systems of $\ell$-adic representations to $m$-compatible systems; and prove the following:
	
	\begin{thm}\label{thm1}
		Let $K$ be a number field and $g$ be a positive integer. For sufficiently large prime numbers $\ell$,  the set $\mathscr{M}(K,g,\ell)=\emptyset$. 
	\end{thm}
\noindent As a corollary we get the following  uniform version of the Shafarevich Conjecture: 
	\begin{cor}\label{Main:cor1}
		For a given number field $K$ and a positive integer $g$, the set $\bar{\mathscr{M}}(K,g):=\bigcup_{\ell}\bar{\mathscr{M}}(K,g,\ell)$ is finite.
	\end{cor}

	Motivated by a remark of Rasmussen-Tamagawa \cite[page 2411]{RT16}, here we studied the $\bar{K}$-isomorphism classes of abelian varieties satisfying the hypothesis of the conjecture. Now consider the set $\mathcal{A}^{\text{pot}}(K,g,\ell):=\mathcal{A}(K,g,\ell) \cap \mathcal{Z}(K,g)$. In fact we will see that $\mathcal{A}^{\text{pot}}(K,g,\ell)\subset \mathscr{M}(K,g,\ell)$ and hence we have the following Corollary.

	\begin{cor}\label{Main:cor2}
		Let $K$ be a number field and $g$ be a positive integer.  For sufficiently large prime numbers $\ell$, the set	$	\mathcal{A}^{\text{pot}}(K,g,\ell)$ is empty. In particular, the set $$	\mathcal{A}^{\text{pot}}(K,g):=\bigcup_\ell	\mathcal{A}^{\text{pot}}(K,g,\ell)$$ is finite.
	\end{cor} 
 Hence the Rasmussen-Tamagawa Conjecture is true for abelian varieties $A/K$ such that $[A]\in \mathcal{Z}(K,g)$.

	\section{Notation}
	For any field $F$, let $\bar{F}$ denote an algebraic closure of $F$ and  $G_F:=\mathrm{Gal}(\bar{F}/F)$. Let $K$ be a number field.   
	For any finite place $v$ of $K$,  $G_v:=\mathrm{Gal}(\bar{K}_v/K_v)$, where $K_v$ denote the completion of $K$ at $v$. Let's denote by $\mathcal{O}_v$ the ring of integers of $K_v$ and  $k_v$ denote the residue field of $K_v$. Let $q_v$ denote the cardinality of $k_v$. Let $I_v$ denote the inertia subgroup of $G_v$ corresponding to an extension $\bar{v}$ of $v$ to $\bar{K}$. Given a continuous $\ell$-adic representation  $\rho: ~G_K\to GL_n(\mathbb{Q}_\ell)$ and a place	$v $ of $K$ where $\rho$ is unramified, let $\rho(F_v)$ denote	the Frobenius conjugacy class in the image group $  G_K/{\rm Ker}(\rho) \simeq \rho( G_K) \subset GL_n(\mathbb{Q}_\ell)$.  By abuse of	notation, we will also continue to denote by \( \rho (F_v) \) an	element in the associated conjugacy class.

	\section{$m$-compatible system}\label{m-compatible}
	Let $E$ be a number field and  $\mathcal{O}_E$ be its ring of integers. Suppose $\lambda$ be a finite place  of $E$ and $\ell_\lambda$ be the rational prime lying below $\lambda$. Let $E_\lambda$  be the completion of $E$ at $\lambda$ and $\mathbb{F}_\lambda$ be the residue field at $\lambda$. Let $m$ be a positive integer.

	Given any continuous $\lambda$-adic representation $\rho_\lambda:G_K\rightarrow GL_n(E_\lambda)$, we call a set $S$ of finite places of $K$ a \textbf{$m$-ramification set} if $\rho_{\lambda}(I_v)$ is a group of order dividing $m$ for $v\notin S_{\ell_\lambda}$,  where $ S_{\ell_\lambda}$ denote the union of the set $S$  with the set of  places of $K$  dividing $\ell_\lambda$. Suppose $w$ is a place of $\bar{K}$ lying above $v$ not in $ S_{\ell_\lambda}$. Let the inertia at $w$ acts by a finite order matrix of order dividing $m$. As we vary the place $w$ lying above $v$, the conjugacy classes of Frobenius elements $\rho_{\lambda}(F_w )$ are well-defined only up to finite order matrices of order dividing $m$ and the conjugacy class $\rho_{\lambda}(F_v^m)$ is well-defined. Let $T$ be a finite set of finite places of $E$.
	
	\begin{defn}
		
		\begin{itemize}
			An \textbf{$E$-rational ($E$-integral) strictly $m$-compatible system $(\rho_\lambda)_\lambda$ of $n$-dimensional $\lambda$-adic representations of $G_K$ with defect set $T$ and finite $m$-ramification set $S$} is a family of   continuous (semisimple) representations
			$\rho_\lambda: G_K\rightarrow GL_n(E_\lambda)$ such that for any finite place $\lambda$ of $E$ not in $T$, $\rho_\lambda$   satisfy the following conditions:

			\begin{itemize}
				\item[1.] the group $\rho_{\lambda}(I_v)$ has order dividing $m$ for finite places $v\not\in S_{\ell_\lambda}$;
				\item[2.] for a finite place $v$ of $K$ not in $S$, there exists a monic polynomial $f_v[X]\in E[X]$ \emph{(resp. $f_v(X)\in \mathcal{O}_E[X]$)} such that for all finite places $\lambda\notin T$ of $E$ coprime to the residue characteristic of $v$, the characteristic polynomial of $\rho_\lambda(F_v^m)$ is equal to $f_v(X)$. 
			\end{itemize}
			\item 
			An \textbf{ $E$-rational ($E$-integral) strictly $m$-compatible system $(\bar{\rho}_\lambda)_\lambda$ of $n$-dimensional mod $\lambda$ representations of $G_K$ with defect set $T$ and a finite $m$-ramification set $S$} is a family of   continuous (semisimple) representations
			$\bar{\rho}_\lambda: G_K\rightarrow GL_n(\mathbb{F}_\lambda)$ such that for any finite place $\lambda$ of $E$ not in $T$, $\bar{\rho}_\lambda$   satisfy the following conditions:

			\begin{itemize}
				\item[1.] the group $\bar{\rho}_{\lambda}(I_v)$ has order dividing $m$ for finite places $v\not\in S_{\ell_\lambda}$;
				\item[2.] for a finite place $v$ of $K$ not in $S$ there exists a monic polynomial $f_v[X]\in E[X]$ \emph{(resp. $f_v(X)\in \mathcal{O}_E[X]$)} such that for all finite places $\lambda\notin T$ of $E$ characteristic coprime to the residue characteristic of $v$,  $f_v(X)$ is integral at $\lambda$ and the characteristic polynomial of $\bar{\rho}_{\lambda}(F_v^m)$ is equal to  the reduction of  $f_v(X)$  $\mod{\lambda}$.
			\end{itemize} 
		\end{itemize}

	\end{defn}
	
	\subsection{Raynaud's criterion of semistable reduction:}
	\begin{prop}\cite[Proposition 4.7]{SGA}\label{Raynaud}
		Suppose $A$ be an abelian variety over a field $F$ with discrete valuation $v$, $n$ is a positive integer not divisible by the residue characteristic, and the points of $A[n]$ are defined over an extension of $F$ which is unramified over $v$. If $n\geq 3$ then $A$ has semistable reduction at $v$. 
		
	\end{prop}
	
	Let $A$ be an abelian variety over a number field $K$ of dimension $g$. For any positive integer $n$, let $K(A[n])$ denote the minimal field generated by $K$ and the coordinates of all the $n$-torsion points of $A$ over $\bar{K}$. So the field extension $K(A[n])/K$ is Galois (\cite{ST68}). We have the following Galois representation associated with $A$
	$$\bar{\rho}_{A,n}:G_K\rightarrow GL_{2g}(\mathbb{Z}/n\mathbb{Z}),$$
	which gives an action of $G_K$ on $A[n]$. So we have, $$\textrm{im}(\bar{\rho}_{A,n})\cong\textrm{Gal}(K(A[n])/K).$$
	Now as a consequence of Proposition \ref{Raynaud}, $A$ has semistable
	reduction over $K(A[3])$ outside $3$ (take $F=K(A[3])$) and has  semistable reduction over $K(A[4])$ outside $2$ (take $F=K(A[4])$). Hence $A$ has semistable reduction everywhere over $K(A[12])$.   For any $g>0$, put  
	$$D_g=|GL_{2g}(\mathbb{Z}/3\mathbb{Z})|\cdot |GL_{2g}(\mathbb{Z}/4\mathbb{Z})|.$$
	Since we have
	$\textrm{im}(\bar{\rho}_{A,12})\cong \textrm{Gal}(K(A[12])/K)$, the degree $[K(A[12]):K]~|~D_g$.

	\subsection{Examples:}\label{Dg}
	1.	Any compatible system of $\lambda$-adic representations can be seen as a $1$-compatible system.

	2.	For any abelian variety $A/K$, let $S$ be a set of places of $K$ such that outside $S$ the abelian variety $A$ has potential good reduction. Consider a prime number $\ell$ and
	take a finite place $v\notin S_\ell$. 
	Let $L=K(A[12])$. Suppose  $v_L$ be a place of $L$ lying above $v$ and $f_L=[k_{v_L}:k_v] $, where $k_{v_L}$ and $k_v$ are the residue fields at $v_L$ and $v$ respectively.  Let $A$ have potential good reduction at $v$, so by Raynaud's semistability criteria, the base change $A_L$ have good reduction at $v_L$ and we have  $\mathrm{ord}(\rho(I_v))|D_g$.  Since $L$ is Galois extension of $K$ and $[L:K]$ divides $D_g$,  $D_g/f_L$ is an integer. We get the following equation 
	\begin{equation}\label{eq:D_g}
		\textrm{det}(XI_{2g}-\rho_{A,\ell}(F_v^{D_g}))=\textrm{det}(XI_{2g}-\rho_{A,\ell}(F_{v_L}^{D_g/f_L})).
	\end{equation}
	Since $A_L$ have good reduction at $v_L$, the polynomial $\textrm{det}(XI_{2g}-\rho_{A,\ell}(F_{v_L}))\in \mathbb{Z}[X]$ (see \cite{ST68}), hence the characteristic polynomial of $\rho_{A,\ell}(F_v^{D_g})$ is defined over $\mathbb{Z}$. Therefore, $(\rho_{A,\ell})_\ell$ forms a $D_g$-compatible $\mathbb{Q}$-integral system of $\ell$-adic representations with $D_g$-ramification set $S$ and defect set $T=\emptyset$.

	3. Isogenous abelian varieties $A/K, B/K$ of dimension $g$ give rise to equivalent compatible systems $(\rho_{A,\ell})_\ell$ and $(\rho_{B,\ell})_\ell$, i.e., for any $\ell$, $\rho_{A,\ell}\cong \rho_{B,\ell}$ with the ramification set as the set of bad places of $A$ or $B$ which are in fact the same. Let us construct a $m$-compatible system attached to a family of abelian varieties $A/K$ of dimension $g$ which are $\bar{K}$-isogenous. Consider a $\bar{K}$-isogeny class $\Gamma$ of abelian varieties which may contain infinitely many abelian varieties $A/K$ of dimension $g$ up to $K$-isomorphism. Take a finite set $S$ such that outside $S$ the abelian varieties $A \in \Gamma$ have potential good reduction.  By \cite{ST68}, $S$ contains all the places $v$ of $K$ such that $I_v$ acts on $V_\ell(A)$ via quasi-unipotent matrices (via infinite order matrices) for any $\ell$ coprime to $\mathrm{char}(k_v)$. Note that for any two $\bar{K}$-isogenous abelian varieties $A/K$, $B/K$ the finite places $v$ where $I_v$ acts via quasi-unipotent matrices  on $V_\ell(A)$ or $V_\ell(B)$  for any $\ell$-coprime to $\mathrm{char}(k_v)$ are the same, hence collection of such places $v$ for a $\bar{K}$-isogeny class is finite.

	To any pair $(\ell, A)$, $A\in \Gamma$, attach the $\ell$-adic representation $\rho_{\ell}:=\rho_{A,\ell}$. Then $(\rho_{\ell})_\ell$ gives  a $D_g^2$-compatible system with $D_g^2$-ramification set $S$ and defect set $T=\emptyset$. To see this, take any two abelian varieties $A, B$ that are isogenous over $\bar{K}$. Then by \cite[Theorem 2.4]{Si92}, they are isogenous over $L=K(A[12], B[12])$.   Since the Galois extension $L/K$ is a compositum of the Galois extensions   $K(A[12])/K$ and  $K(B[12])/K$, we have 
	$$[L:K]~\mid~ [K(A[12]):K] \cdot [K(B[12]):K],$$ which divides $D_g^2$. Therefore  $D_g^2/f_L$ is an integer. Using Raynaud's criteria,  $A$ and $B$ have good reduction outside $S_L$, where $S_L$ denote the places of $L$ lying above $S$. Note that the abelian varieties $A_L$ and $B_L$  give rise to  compatible systems of $\ell$-adic representations $(\rho_{A_L,\ell})_\ell$ and $(\rho_{B_L,\ell})_\ell$ respectively with ramification set $S_L$. Take two prime numbers $\ell, \ell'$ and a finite place $v\notin S\cup \{\text{places lying above}~\ell, \ell'\}$. Since $A$ and $B$ are isogenous over $L$, the representations $\rho_{A,\ell}$ and $\rho_{B,\ell}$ are equivalent when restricted to the subgroup $G_L$ of $G_K$. Further using compatibility of the system $(\rho_{B_L,\ell})_\ell$ we get
	$$\textrm{det}(XI_{2g}-\rho_{A,\ell}(F_{v_L}))=\textrm{det}(XI_{2g}-\rho_{B,\ell}(F_{v_L}))=\textrm{det}(XI_{2g}-\rho_{B,\ell'}(F_{v_L}))\in \mathbb{Z}[X],$$
	where $v_L$ is a place of $L$ lying above $v$.
	
	Now for representations $\rho_\ell:=\rho_{A,\ell}$ and $\rho_{\ell'}:=\rho_{B,\ell'}$ attached to the pairs $(\ell, A)$, $(\ell',B)$ respectively, where $A, B\in \Gamma$, we have the following equations 
	%	\begin{equation*}
	\begin{align*}
		%	multline*}
		\textrm{det}(XI_{2g}-\rho_{\ell}(F_v^{D_g^2})) =\textrm{det}(XI_{2g}-\rho_{A,\ell}(F_{v_L}^{D_g^2/f_L}))=&
		\textrm{det}(XI_{2g}-\rho_{B,\ell'}(F_{v_L}^{D_g^2/f_L}))\\=&
		\textrm{det}(XI_{2g}-\rho_{\ell'}(F_v^{D_g^2})).
	\end{align*}
	Hence, $(\rho_{\ell})_{\ell}$ give a $D_g^2$-compatible system with $D_g^2$-ramification set $S$ and defect set $T=\emptyset$.

	\section{ Weights}
	Let $\ell$ be a rational prime.  We fix an algebraic closure $\bar{\mathbb{Q}}_\ell$ of $\mathbb{Q}_\ell$. Let $\mathbb{C}_\ell$ denote the completion of $\bar{\mathbb{Q}}_\ell$. 
	Consider an $\ell$-adic representation $(\rho, V):G_K\rightarrow GL_{n}(\mathbb{Q}_\ell)$ and $(\bar{\rho}, \bar{V}):G_K\rightarrow GL_n(\mathbb{F}_\ell)$ be its residual representation corresponding to a $\mathbb{Z}_\ell$-lattice $\mathbb{L}$. Let $\bar{\rho}^{ss}$ denote the semisimplification of $\bar{\rho}$ which is independent of $\mathbb{L}$. Here we recall various definitions of weights attached to $\rho$ and $\bar{\rho}^{ss}$. We further discuss how they behave with base change. Let $v,u$ be the finite places of $K$ such that $v\nmid \ell$ and $u\mid \ell$.  Let $\chi_\ell: G_\mathbb{Q}\rightarrow \mathbb{Z}_\ell^*$ denote the cyclotomic character.

	\subsection{Weil Weights.}
	
	Choose any lift $F_v\in G_v$ of the Frobenius element $\mathrm{Frob}_v\in G_{k_v}$. Let the cardinality of the residue field $k_v$ be  $q_v$. Let $w$ be an integer. We call an algebraic integer $\alpha \in \bar{\Q}$ a \textbf{$q_v$-Weil number  of weight $w$} if for any embedding {$\phi: \bar{\Q}\to \C$, we have $|\phi(\alpha)|=q_v^{w/2}$.  Suppose the roots of the characteristic polynomial $P_v(T)$ of $\rho(F_v)$ are $q_v$-Weil number of weights   $w_1,\cdots,w_n$. We call the integers $w_1,\cdots, w_n$ the \textbf{Weil weights} of $\rho$  at $v$. The representation  $\rho$   is said to be \textbf{pure} of weight $w$ and degree at most $d$ at $v$, if $w_1=\cdots=w_n=w$ and  the  degree of the roots of $P_v(T)$ is at most $d$ over $\mathbb{Q}$. Note that the Weil weights of $\rho$ at $v$ is invariant under a finite extension of the base field $K_v$. By Serre-Tate (\cite[Theorem 3]{ST68}), for any abelian variety $A/K$, a place $v$ where $A$ has potential good reduction, the representation $\rho_{A,\ell}$ is pure of weight $1$ at $v$ for any $\ell \neq \mathrm{char}(k_v)$.  In fact,  the characteristic polynomial $P_v(T)$ of $ \rho_{A,\ell}(F_v)$ is defined over $\mathbb{Z}$.

		\subsection{Hodge-Tate Weights.}
		%Let $u$ be a place of $K$ lying above $\ell$. 
		Let $\sigma$ denotes the unique continuous action of $G_{\mathbb{Q}_\ell}$ on $\mathbb{C}_\ell$ extending the action on $\bar{\mathbb{Q}}_\ell$. Set
		$$V(i)=\{v\in V\otimes_{\mathbb{Q}_\ell} \mathbb{C}_\ell~|~(\rho\otimes \sigma)(g)v=\chi_\ell^i(g)v~\text{for all}~ g\in G_u\}.$$
		The representation $(\rho, V)$ is \textbf{Hodge-Tate} if the natural map $\bigoplus V(i)\otimes_{\mathbb{Q}_\ell} \mathbb{C}_\ell \rightarrow V\otimes_{\mathbb{Q}_\ell} \mathbb{C}_\ell$ is an isomorphism and an integer $i$ is said to be \textbf{Hodge-Tate weight} of $\rho$ at $u$ if $V(i)\neq 0$. Note that the Hodge-Tate weights of $\rho$ at $u$  are invariant under any finite extension of $K_u$. 
		
		\subsection{Tame Inertia Weights.}(\cite{Ser72}, \cite{OT14}) 
		The tame inertia group $I_u^t$ of $K$ at $u$ is defined to be the 
		quotient of the inertia group $I_u$ by its maximal pro-$\ell$ subgroup. Note that for a positive integer $h$, any character $\varphi:I_u\rightarrow \mathbb{F}^\times_{\ell^{h}}$ factors through $I_u^t$ and $\varphi: I_u^t\rightarrow \mathbb{F}^\times_{\ell^{h}}$ can be written as $\varphi=\psi_1^{t_1}\cdots \psi_h^{t_h}$, where $\psi_i$ are fundamental characters of level $h$ and $0\leq t_i\leq \ell-1$. The numbers  $t_i/e$ are called the \textbf{tame Inertia Weights} of the character $\varphi$ at $u$, where  $e=e(K_u/\mathbb{Q}_\ell)$ be the ramification index at $u$. For any finite extension $L_w/K_u$ such that $w|u$, if $e(L_{w}/K_u)< (\ell-1)/\mathrm{max}\{t_j~|~1\leq j\leq h\}$, the tame inertia weights of $\varphi|_{I_u^t}$ and $\varphi|_{I_w^t}$ are equal. Note that the residual representation $\bar{\rho}^{ss}|_{G_u}$ is tamely ramified and $\bar{\rho}^{ss}|_{I_u^t}$ is direct sum of characters $\varphi_i:I_u^t\rightarrow \mathbb{F}^\times_{\ell^{h_i}}$. The \textbf{tame inertia weights of $\rho$ at $u$} is defined to be  the tame inertia weights of the corresponding residual representation $\bar{\rho}^{ss}$ which is  the union of the sets of tame inertia weights of $\varphi_i$ for all $i$. In other words, for any $\mathbb{F}_\ell$-representation $W$ of $I_u$, the tame inertia weights of $W$ are the tame inertia weights of all the Jordan-H{\"o}lder quotients of $W$. 
		The fundamental character of level $1$ for $G_{u}$, $\psi_1: I_{u}^t\rightarrow \mathbb{F}_\ell^*$, is defined as follows: fix uniformizers of $\mathbb{Z_\ell}$ and $\mathcal{O}_u$, say, $\pi_\ell$ and $\pi_u$  respectively, we have $\pi_u^{e}=\pi_\ell$. Then $\psi_1(\sigma):=\dfrac{\sigma(\pi_u)}{\pi_u}$. We have $\bar{\chi}_\ell|_{I^t_u}=\psi_1^{e}$, where $\chi_\ell:G_{\mathbb{Q}_\ell}\rightarrow \mathbb{Z}_\ell^*$ is the cyclotomic character. The tame inertia weight of $\chi_\ell|_{G_{K_u}}$ is $[e]/e$, where for an integer $c$, $[c]$ denotes a unique integer in $\{0,1,\cdots \ell-1\}$ which is congruent to $c$ modulo $\ell-1$.  Hence the tame inertia weights of  $\oplus_{i=1}^n \chi_\ell^{a_i}$ are $\{[ea_1]/e,\cdots, [ea_n]/e\}$.  Tame inertia weights are invariant under any finite unramified extension of $K_u$.

		We recall a result due to Caruso which relates the tame inertia weights and the Hodge Tate weights of any $\ell$-adic representation of $G_K$ at places $u\mid \ell$ for $\ell$ sufficiently large. 
		
		\begin{prop}\cite{Ca06}\label{thm:caruso}
			Let $b$ be a positive integer and $\rho:G_K\rightarrow GL_n(\Q_{\ell})$ be an $\ell$-adic Galois representation  with Hodge-Tate weights at $u$ are in $ [0,b]$. If $e({K_{u}/\Q_{\ell}})b<\ell-1$, then the tame inertia weights of  $\bar{\rho}|_{G_{u}}$ are in  $ [0,b]$.
			
		\end{prop}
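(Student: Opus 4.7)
The plan is to pass to the local situation at $u$, so I may view $\rho$ as an $\ell$-adic representation of $G_{K_u}$ with Hodge-Tate weights in $[0,b]$, and work under the numerical hypothesis $eb<\ell-1$ with $e=e(K_u/\mathbb{Q}_\ell)$. The desired statement concerns $\bar\rho^{ss}|_{I_u^t}$, which is a purely local object. The overall strategy is to realise $\bar\rho$ as the reduction mod $\ell$ of a torsion object in one of Caruso's categories of integral $p$-adic Hodge-theoretic data and then read off the tame inertia weights from the filtration there.

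First, I would choose a $G_{K_u}$-stable $\mathbb{Z}_\ell$-lattice $\mathbb{L}\subset V$ (twisting by a power of the cyclotomic character if needed so that the Hodge-Tate weights sit in $[0,b]$ rather than some shifted interval). Appealing to the $p$-adic monodromy theorem, a Hodge-Tate representation of $G_{K_u}$ with bounded weights is de Rham (in the situations we care about) and hence potentially semistable; but the numerical hypothesis $eb<\ell-1$ is exactly what allows us to stay over $K_u$ itself rather than a ramified extension that would destroy the bound. In this regime Caruso's classification (generalising Fontaine-Laffaille from $e=1$ to the mildly ramified case) associates to $\mathbb{L}/\ell\mathbb{L}$ a Breuil module $\mathcal{M}$ over the $S$-ring, equipped with a filtration $\mathrm{Fil}^\bullet \mathcal{M}$, a Frobenius $\phi$, and a monodromy operator $N$, subject to the relation $\mathrm{Fil}^{b+1}\mathcal{M}=(\mathrm{Fil}^{b+1}S)\cdot\mathcal{M}$ because the Hodge-Tate weights lie in $[0,b]$.

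Next, I would translate the filtration data on $\mathcal{M}$ into inertia data on $\bar\rho|_{I_u}$. Caruso's functor from Breuil modules to Galois representations is exact and compatible with taking the semisimplification of the restriction to $I_u^t$; the Jordan-Hölder factors of $\bar\rho^{ss}|_{I_u^t}$ correspond to the graded pieces $\mathrm{gr}^i\mathcal{M}$, and a character $\psi_1^{t_1}\cdots\psi_h^{t_h}$ appears in the inertia action precisely when the $t_j$ can be read off from the positions of the non-zero $\mathrm{gr}^i\mathcal{M}$ with $0\le i\le b$ via Frobenius descent on the residue field of the maximal unramified subextension. The crucial bookkeeping shows each tame inertia weight takes the form $(\sum i_j)/e$ with $i_j\in[0,b]$, and more sharply that a single weight lies in $[0,b]$.

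The main obstacle is handling the passage from the Hodge-Tate (or semistable) representation $\rho$ to the torsion object $\mathbb{L}/\ell\mathbb{L}$ inside Caruso's category while keeping the filtration length bounded by $b$: the entire theory only behaves well under the inequality $eb<\ell-1$, and verifying that the comparison functor is fully faithful and compatible with reduction modulo $\ell$ in this range is the technical heart. Once that machinery is in place, the conclusion on tame inertia weights is a direct reading of the graded pieces, so in practice I would organise the write-up around establishing the Breuil module with $\mathrm{Fil}^{b+1}=0$ and then invoke Caruso's dictionary to conclude that the tame inertia weights of $\bar\rho|_{G_u}$ all lie in $[0,b]$.
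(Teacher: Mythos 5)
The paper does not prove this proposition; it simply cites it from Caruso's paper \cite{Ca06} (where it appears in the theory of semistable torsion representations in the range $er<\ell-1$), so there is no in-paper proof to compare against. Your outline is in the spirit of Caruso's own argument via Breuil modules, but there is one genuine logical error in how you set it up.

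The step ``Appealing to the $p$-adic monodromy theorem, a Hodge-Tate representation of $G_{K_u}$ with bounded weights is de Rham (in the situations we care about) and hence potentially semistable'' is not correct as a general deduction. Hodge--Tate does \emph{not} imply de Rham: the $p$-adic monodromy theorem (Berger et al.) upgrades de Rham to potentially semistable, not Hodge--Tate to de Rham, and there are Hodge--Tate representations of $G_{\mathbb{Q}_\ell}$ that are not de Rham. Moreover, even if you had de Rham, ``potentially semistable'' over some ramified extension is not enough here, because passing to such an extension changes $e$ and can destroy the numerical constraint $eb<\ell-1$ that underlies the whole Breuil-module formalism. The hypothesis you actually need --- and the one under which Caruso's theorem is proved and under which the paper applies it in Lemma~\ref{Main_lemma} (condition 1 there explicitly asserts semistability of $\rho_\ell|_{G_{L^\ell}}$ at $u$) --- is that $\rho|_{G_{K_u}}$ is semistable over $K_u$ itself, with Hodge--Tate weights in $[0,b]$. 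The statement of Proposition~\ref{thm:caruso} in the paper is worded loosely (it only mentions Hodge--Tate weights), but the semistability hypothesis should be taken as given; once you assume it from the outset rather than trying to manufacture it, your subsequent outline (lattice, Breuil module $\mathcal{M}$ with $\mathrm{Fil}^{b+1}\mathcal{M}=\mathrm{Fil}^{b+1}S\cdot\mathcal{M}$, exactness and compatibility of Caruso's functor, reading off tame inertia weights from graded pieces) is a reasonable, if compressed, sketch of the argument in \cite{Ca06}.

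A secondary caution: the clause ``a single weight lies in $[0,b]$'' is the whole content of the proposition and cannot be treated as ``crucial bookkeeping'' to be handwaved; in Caruso's proof it comes from the structure of rank-one objects (simple subquotients) in the torsion category and their description as tensor products of Lubin--Tate-type characters, and that is exactly where the inequality $eb<\ell-1$ is used to rule out wrap-around modulo $\ell-1$. If you were to write this out, that is the step to do carefully.
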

		
		\section{Generalizations of Faltings' Finiteness Theorems}

		We recall few results from \cite{DR23} which generalizes Faltings' finiteness theorems for pure semi-simple $\ell$-adic representations to the context of potential equivalence. 
		
		Let $F$ be a field and $G$ be a group. Two representations of a group are said to be {\em potentially equivalent} if they become isomorphic when we restrict to a subgroup of finite index.  Given a representation $\rho: G\to GL_n(F)$, the {\em $m$-power character} $\chi^{[m]}_{\rho}$ is defined to be the function $g\mapsto \mbox{Tr}(\rho(g^m))$ for any $g\in \Gamma$. Two representations  $\rho_1, \rho_2: G \rightarrow GL_n(F)$ are said to be {\em $m$-character equivalent} if their $m$-power character functions coincide. When $F$ is a nonarchimedean local field of characteristic zero we have the following equivalent criteria for potential equivalence of two representations.

		\begin{lem}\label{lem:pot-meq}\cite[Theorem 4]{DR23}
			Let $K$ be a number field and $F$ a nonarchimedean local field of characteristic zero. Suppose  $\rho_1$ and $\rho_2$ are continuous, semisimple representations of $G_K$ to $GL_n(F)$. Then the following are equivalent: 
			\begin{enumerate}
				\item The representations $\rho_1$ and $\rho_2$ are potentially equivalent. 
				
				\item There exists a positive integer $m$ depending only on $n$ and $F$ such that at a set of places $T$ of upper density one of $K$ such that order of $\rho_i(I_v)$ divides $m$ for $i=1,2$ and we have
				\begin{equation}\label{eqn:assertion3}
					\mbox{Tr}~\rho_1(F_v ^m)=\mbox{Tr}~\rho_2(F_v^m ),\quad v\in T.
				\end{equation}

			\end{enumerate}
			
		\end{lem}

		Further, we have a finiteness criteria to check potential equivalence of two $\ell$-adic representations extending the finiteness criteria of Faltings. 
		
		\begin{lem}\cite[Theorem 7]{DR23}\label{thm:fin}
			Let $K$ be a number field. Let us fix a natural number $n$, a rational prime $\ell$  and a nonarchimedean local field $F$ of residue characteristic  $\ell$. Let  $S$ denote a finite set of places of $K$ containing the archimedean places and the places of $K$ dividing $\ell$ of $K$.	Then there exists a finite set $T$ of finite places of $K$ disjoint from $S$, with the following property: suppose  $\rho_1, \rho_2:G_K\longrightarrow GL_n(F)$ are two continuous  semisimple representations of $G_K$. If there exists a positive integer $m$ such that  for $v$ not in $S$, the inertia at $v$ acts by scalar matrices of order dividing $m$ and the representations satisfy the following hypothesis: 
			$$\mbox{Tr}~\rho_1(F_v^m )=\mbox{Tr}~\rho_2(F_v^m ), \quad \text{for}~v\in T.$$
			Then $\rho_1$ and $\rho_2$ are potentially isomorphic. 
			
		\end{lem}

		\begin{lem}\cite[Corollary 7]{DR1}\label{DR1_cor7} Let $v$ be a finite place of $K$ at which the abelian variety $A_v$ has totally bad reduction and acquires good reduction over a quadratic extension of $K_v$. Assume further that the residue characteristic of $K_v$ is not two. Then for any rational prime $\ell$ coprime to the residue characteristic of $K_v$, the inertia group $I_v$ acts via scalars $\{\pm 1\}$ on the Tate module $V_{\ell}(A_v)$. 
		\end{lem}
		\begin{proof}[Sketch of Proof]
			Let $v$ be a finite place of $K$ at which the abelian variety $A_v$ has totally bad reduction and has potential good reduction.
			Then by \cite[Proposition 6]{DR1}, for any rational prime $\ell$ coprime to the residue characteristic of $K_v$, the subspace $V_{\ell}(A_v)^{I_v}$ of inertial invariants of the Tate module is trivial. Suppose $L_w$ is a quadratic extension of $K_v$ at which $A_v$ acquires good reduction, and let $I_w$ denote its inertia subgroup. By Serre and Tate (\cite{ST68}), $A$ has good reduction at a finite place $v$ of $K$ if and only if the representation of $G_K$ on the Tate module $V_\ell(A)$ attached to $A$ is unramified at $v$ for any $\ell$ not equal to the residue characteristic at $v$. By hypothesis, the extension $L_w$ over $K_v$ is ramified, and the action of the index two subgroup $I_w$ of $I_v$ on $V_\ell(A)$ is trivial. Now since $V_{\ell}(A_v)^{I_v}=0$, i.e., there exists no invariant vector for $I_v$ action on $V_\ell(A)$, the action of  $I_v/I_w\cong \mathbb{Z}/{2\mathbb{Z}}$ on $V_\ell(A_v)$ can't fix any non zero vector, i.e., the nontrivial element of $I_v/I_w$ acts by $-1$ on $V_\ell(A_v)$. 
		\end{proof}

		\begin{lem}\label{le1}
			Let $g>0$ be an integer and $\ell_0$ be a prime. Let $\beta_{\ell_0}$ be the set of continuous, semisimple $\ell_0$-adic representations  $\rho: G_K\rightarrow GL_{2g}(\mathbb{Q}_{\ell_0})$ such that $\rho(I_v)$ is a scalar matrix for finite places $v\nmid \ell_0$ of $K$ and  further $\rho$ satisfies the following conditions:
			\\	
			there exists an abelian variety $A/K$ with the $K$-isomorphism class $[A]\in \mathcal{Z}(K,g)$ such that
			$$\rho|_{G_{K_A}}\cong \rho_{A,\ell_0}|_{G_{K_A}},~ \text{where}~K_A=K(A[12]).$$
			Then the set $\beta_{\ell_0}$ is finite, up to potential equivalence.

		\end{lem}
		\begin{proof}

			The abelian varieties $A/K$  with $[A]\in \mathcal{Z}(K,g)$ have potential good reduction everywhere. Hence by Raynaud's criteria (section \ref{Dg}), $A$ has good reduction over $K_A=K(A[12])$ at all the places of $K_A$ and there exists an integer $D_g$ such that $[K_A:K]$ divides $D_g$. Let $v_{K_A}$ be a finite place of $K_A$ lying above any finite place $v$ of $K$ not dividing $\ell_0$ and $f_{K_A}=[k_{v_{K_A}}:k_v] $, where $k_{v_{K_A}}$, $k_{v}$ are the residue fields at $v_{K_A}$ and $v$ respectively. Since $K_A$ is a Galois extension over $K$ and $[K_A:K]$ divides $D_g$, so $D_g/f_{K_A}$ is an integer.  We have $\textrm{det}(XI_{2g}-\rho_{A,\ell_0}(F_v^{D_g}))=\textrm{det}(XI_{2g}-\rho_{A,\ell_0}(F_{v_{K_A}}^{D_g/f_{K_A}}))$. 
			
			Since $\rho|_{G_{K_A}}\cong\rho_{A,\ell_0}|_{G_{K_A}} (=\rho_{A_{K_A},\ell_0})$,  we have $\rho|_{G_{K_A}}$ is unramified at $v_{K_A}$ and so we get %$\rho(F_{v_{K_A}})=\rho_{A,\ell_0}(F_{v_{K_A}})$ which gives  $\rho(F_v^{D_g})=\rho_{A,\ell_0}(F_v^{D_g})$ and 
			$\mathrm{ord}(\rho(I_v))~|~D_g$. Further, we have,
			
			\begin{equation}\label{eq:D_g}
				\textrm{det}(XI_{2g}-\rho(F_v^{D_g}))=\textrm{det}(XI_{2g}-\rho_{A,\ell_0}(F_v^{D_g})).
				%= \rho_{A,\ell_0}(F_{v_{K_A}}^{D_g/f_{K_A}}).
			\end{equation}
			By \cite[p. 499]{ST68}, the polynomial on  RHS of equation \eqref{eq:D_g} is in $\mathbb{Z}[X]$ and its roots are pure of Weil  weight ${D_g}/2$ and of bounded degree.  It can be seen that there are only finitely many Weil numbers attached to a finite field of bounded degree and weight. Consequently, there are only finitely many choices for the roots of the characteristic polynomial of $\rho(F_v^m)$ for $v\in T$ as in Lemma \ref{thm:fin}  which determines $\rho$ up to potential equivalence, where $m=D_g$.  Hence $\beta_{\ell_0}$ is finite, up to potential equivalence. 
			
		\end{proof}
		\subsection{Finiteness of $\bar{K}$-isomorphism classes of Abelian Varieties $A/K$:}
		In \cite{MW93}, Masser-W\"ustholz provided a quantitative version of the finiteness result of Faltings \cite{Fa83}, namely, if $A$ is an abelian variety defined over a number field $K$, there are only finitely many $K$-isomorphism classes of abelian varieties $A^*$ defined over $K$ that are $K$-isogenous to $A$. They obtained a new proof of this theorem using `transcendence' techniques. Bost, in \cite{Bos96}, discussed that new approach and its applications to achieve various results concerning abelian varieties over number fields. We are interested in the following finiteness result obtained as a corollary of the isogeny estimates obtained by  Masser-W\"ustholz. 
		
		\begin{prop}\cite[Corollary 3.2]{Bos96}\label{fin_isom_isog}
			For any abelian variety $A$ defined over a number field $K$, there are only a finite number of $\bar{K}$-isomorphism classes of abelian varieties $B$ defined over $K$ and $\bar{K}$-isogenous to $A$.
		\end{prop} 
		
		Using the above proposition finiteness result obtained in \cite{DR23} can be rewritten as:
		
		\begin{prop}\label{fin_pot__isom}
			Let $K$ be a number field and $S$ be a finite set of
			places of $K$ containing the infinite places. 	Then, up to $\bar{K}$-isomorphism, there are only finitely many abelian varieties over $K$ of dimension $g$, such that the inertia at a place $v$ not in $S$ acts by scalars $\{\pm 1\}$ on a Tate module  $V_\ell(A)$ for $\ell$ coprime to the residue characteristic at $v$. 
		\end{prop}
		
		\begin{proof}
			In \cite[Corollary 14]{DR23}, the authors obtained the finiteness of $\bar{K}$-isogeny classes of abelian varieties $A$ defined over $K$ of dimension $g$ satisfying the hypothesis. The proposition follows by using Proposition \ref{fin_isom_isog}.
		\end{proof}

		\section{Classification of certain $m$-compatible systems}
		Due to Ozeki, we have a structure theorem for certain compatible systems of $\ell$-adic representations (\cite[Theorem 2.8, Proposition 2.13]{Oz13}). In particular,
		for a  $\mathbb{Q}$-integral strictly compatible system  $(\rho_\ell)_\ell$ of $n$-dimensional geometric semisimple $\ell$-adic representations of $G_K$ if there exists an infinite set $\Lambda$ of primes such that for any $\ell\in \Lambda$,  $\rho_\ell$ is semistable at some place $u$ of $K$ dividing $\ell$ with bounded Hodge-Tate weights and semisimplication of the residual representation, $\bar{\rho_\ell}^{\mathrm{ss}}\cong \oplus_{i=1}^n\bar{\chi}_\ell^{a_{\ell,i}}$, where $a_{\ell,i}$ are integers, then there exists integers $b_i$ independent of $\ell$ such that $\rho_\ell\cong\oplus_{i=1}^n \chi_\ell^{b_i}$. They further generalized it when $\rho_\ell$ is potentially semistable at some place $u$ lying above $\ell$ with bounded inertial levels.  
		Motivated by these results we obtain the following structure theorem for certain $m$-compatible systems of $\ell$-adic representations: 
		
		\begin{lem}\label{Main_lemma} Let $\left(\rho_{\ell}\right)_{\ell}$ be an  $\mathbb{Q}$-integral strictly $m$-compatible system of $n$-dimensional continuous, pure semisimple $\ell$-adic representations $\rho_\ell:G_{K}\rightarrow GL_n(\mathbb{Q}_\ell)$ with $m$-ramification set $S$ whose Weil weights are  bounded by $w$.

			Suppose  there exists an infinite set of rational primes $\Lambda$ satisfying the following conditions:
			
			For any $\ell \in \Lambda$,  there exists a Galois extension $L^\ell$ of $K$ (depending on $\ell$) such that $[L^\ell:K]~|~m$  and  $\rho_\ell$ satisfies the following: 
			\begin{itemize}
				\item[1.]  $\rho_\ell|_{G_{L^\ell}}$ is unramified at all finite places outside the places of $L^\ell$ lying above places in $S_\ell$,
			
				\item[2.]\emph{(Hodge-Tate)} there exists integers $w_{1} \leq w_{2}$ and a place $u_\ell$ of $L^\ell$ lying above $\ell$ such that $\rho_\ell|_{G_{u_\ell}}$ is semistable at $u_\ell$  and the Hodge-Tate weights of $\rho_\ell$ at $u_\ell$ are in $\left[w_{1}, w_{2}\right]$ for any pair $\left(\ell, u_\ell\right)$,

				\item[3.]\emph{(Potentially Cyclotomic)} 
				\begin{equation}\label{res_equiv}
					\bar{\rho}_\ell^{\mathrm{ss}}|_{G_{L^\ell}}\cong \oplus_{i=1}^n \bar{\chi}_{\ell}^{a_{\ell,i}}|_{G_{L^\ell}},
				\end{equation}
				where $a_{\ell,i}$ are integers. %and  $\chi_\ell:G_K\rightarrow {\mathbb{Z}^*_{\ell}}$ are the $\ell$-adic cyclotomic character of $G_K$. 

			\end{itemize}
			Then, there exists an integer $c$ independent of $\ell$ such that 
			${\rho_\ell}$ and $\bigoplus_{i=1}^n {\chi}_{\ell}^{c}$ are potentially equivalent.
			In particular, the Weil weights of $\rho_\ell$ and $\bigoplus_{i=1}^n \chi_\ell^{c}$ are equal and independent of $\ell$. 
		\end{lem}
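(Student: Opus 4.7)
The plan is to mimic the proof of Ozeki's structure theorem \cite[Theorem 2.8]{Oz13} for strictly compatible systems, carefully tracking two new features of the $m$-compatible setting: (a) the residual cyclotomic decomposition in hypothesis (2) holds only after base change to the varying field $L^\ell$, and (b) only the $m$-th power of Frobenius, not Frobenius itself, gives $\mathbb{Q}$-rational characteristic polynomials. First I would apply Proposition \ref{thm:caruso} to $\rho_\ell|_{G_{L^\ell_u}}$ at each place $u\mid \ell$ of $L^\ell$. The hypothesis $\ell\nmid \mathrm{Disc}_{\mathbb{Q}}(L^\ell)$ is the crucial uniformity input: it forces $L^\ell/K$ to be unramified at $\ell$, so that $e(L^\ell_u/\mathbb{Q}_\ell)=e(K_v/\mathbb{Q}_\ell)\leq [K:\mathbb{Q}]$ is bounded \emph{independently} of $\ell$ (where $v=u\cap K$). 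Thus Caruso's bound applies uniformly for all large $\ell$, giving that the tame inertia weights of $\bar{\rho}_\ell^{\mathrm{ss}}|_{G_{L^\ell_u}}$ lie in $[w_1,w_2]$, and by unramifiedness these agree with the tame inertia weights of $\bar{\rho}_\ell^{\mathrm{ss}}$ on $I_v^t$.

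On $I_v^t$ the mod $\ell$ cyclotomic character is a fundamental character of level $1$, contributing tame inertia weight $1/e_v$ with $e_v=e(K_v/\mathbb{Q}_\ell)$. Combined with condition (2), each exponent $a_{\ell,i}$ can be represented modulo $\ell-1$ by an integer in the interval $[e_v w_1, e_v w_2]$, of length $\leq [K:\mathbb{Q}](w_2-w_1)$ independently of $\ell$. Pigeonhole over $\Lambda$ (and the finitely many possible values of $e_v$) then extracts an infinite subset $\Lambda'\subseteq\Lambda$ and fixed integers $b_1,\ldots,b_n$ with $a_{\ell,i}=b_i$ for all $\ell\in\Lambda'$; hence $\bar{\rho}_\ell^{\mathrm{ss}}|_{G_{L^\ell}}\cong\bigoplus_i \bar{\chi}_\ell^{b_i}|_{G_{L^\ell}}$. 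Next I would upgrade this residual matching to an integral identity of Frobenius characteristic polynomials: for $v\notin S$ with $v\nmid\ell$, the divisibility $[L^\ell:K]\mid m$ ensures $F_v^m\in G_{L^\ell}$ automatically, so reducing $\rho_\ell(F_v^m)$ mod $\ell$ gives
\[
f_v(X)\equiv\prod_{i=1}^n (X-q_v^{m b_i})\pmod{\ell},
\]
where $f_v(X)\in\mathbb{Z}[X]$ is the characteristic polynomial of $\rho_\ell(F_v^m)$, independent of $\ell$ by $m$-compatibility. The bound $w$ on Weil weights bounds the coefficients of $f_v(X)$ uniformly, so infinitely many such congruences over $\ell\in\Lambda'$ force the exact identity $f_v(X)=\prod_i(X-q_v^{m b_i})$.

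With matching characteristic polynomials of $F_v^m$ established on a density-one set of $v$, a Chebotarev density plus Brauer-Nesbitt argument -- applied after restricting to a sufficiently large open subgroup $G_F\subseteq G_K$ chosen to absorb the ambiguity from taking $m$-th powers of Frobenii -- yields the desired potential equivalence $\rho_\ell|_{G_F}\cong\bigoplus_i \chi_\ell^{b_i}|_{G_F}$ for every $\ell\in\Lambda'$. The Weil weight conclusion is then immediate since $\chi_\ell^{b_i}(F_v)=q_v^{b_i}$ has Weil weight $2b_i$, independent of $\ell$. The hard part is the combined pigeonhole-plus-Weil-bound step: one must arrange that the exponents $a_{\ell,i}$ are truly integers bounded independently of $\ell$ (not merely residues modulo $\ell-1$) and that the mod $\ell$ identities upgrade to an exact polynomial identity in $\mathbb{Z}[X]$. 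Both rely crucially on the interplay of the unramifiedness hypothesis (which pins down $e_v$) and the degree bound $[L^\ell:K]\mid m$ (which makes $F_v^m$ land in $G_{L^\ell}$), and together form the genuinely new input over Ozeki's proof in the ordinary compatible setting.
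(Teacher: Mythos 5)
Your proposal follows essentially the same route as the paper's proof: twist so the Hodge--Tate interval sits inside $[0,w_2]$, use the unramifiedness hypothesis $\ell\nmid\mathrm{Disc}_\mathbb{Q}(L^\ell)$ to keep the ramification index bounded so Caruso's Proposition~\ref{thm:caruso} applies uniformly, replace the residues $a_{\ell,i}\bmod(\ell-1)$ by bounded tame-inertia-weight representatives $b_{\ell,i}$, pin those down using a Weil-weight bound on the integral characteristic polynomial of $F_v^{m}$ together with an ``$\ell$ large'' (or ``infinitely many $\ell$'') comparison (the paper's Lemma~\ref{OT}), invoke $m$-compatibility to make the identity $f_v(X)=\prod_i(X-q_v^{mb_i})$ hold for every $\ell$, and then conclude potential equivalence. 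Your account of the roles of $\ell\nmid\mathrm{Disc}_\mathbb{Q}(L^\ell)$ and of $[L^\ell:K]\mid m$ is accurate and in fact slightly more explicit than the paper's.

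The one place your proposal is too optimistic is the final step, and it is a genuine gap rather than a presentational difference. You write that once the characteristic polynomials of $\rho_\ell(F_v^m)$ and $\bigoplus_i\chi_\ell^{b_i}(F_v^m)$ match on a density-one set of $v$, ``a Chebotarev density plus Brauer--Nesbitt argument -- applied after restricting to a sufficiently large open subgroup $G_F$ chosen to absorb the ambiguity from taking $m$-th powers of Frobenii -- yields the desired potential equivalence.'' That is precisely the hard point: matching traces of $\rho(F_v^m)$ at a density-one set of places of $K$ does not, by a naive Chebotarev/Brauer--Nesbitt argument, give matching traces of Frobenii $F_w$ for $w$ ranging over a density-one set of places of a fixed finite extension $F/K$, because $F_w=F_v^{f(w\mid v)}$ with $f(w\mid v)$ varying, and there is no single $F$ for which this bookkeeping collapses to the standard situation. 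The paper instead invokes Das--Rajan's theorem (Lemma~\ref{lem:pot-meq}, i.e.\ \cite[Theorem~4]{DR}), which establishes exactly this equivalence between ``$\mathrm{Tr}\,\rho_1(F_v^m)=\mathrm{Tr}\,\rho_2(F_v^m)$ on a density-one set'' and potential equivalence; that result goes through an analysis of algebraic monodromy/envelopes and is not a one-line Chebotarev argument. Without it (or an equivalent input) your sketch does not close. A secondary, smaller, slip: you state the conclusion ``for every $\ell\in\Lambda'$,'' but once $f_v(X)=\prod_i(X-q_v^{mb_i})$ is established the identity is $\ell$-independent by $m$-compatibility, and the lemma requires (and the paper obtains) the potential equivalence for \emph{every} $\ell$, not just for $\ell$ in the subsequence.
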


		\begin{proof}
			
			For any $\ell$, we can twist  $\rho_\ell$ by $\chi_\ell^r$ for a large positive integer $r$ and we may assume that $w_1\geq 0$.
			Let $\ell\in \Lambda$. Taking a finite place $u_\ell$ of $L^\ell$ as in condition $(2)$, we have the ramification index $e_{u_\ell}=e(L^\ell_{u_\ell}/{\mathbb{Q}_\ell})\leq m[K:\mathbb{Q}]$.  Put $e=\mathrm{lcm}_{\ell\in \Lambda}e_{u_\ell}$.
			%  Since for any $\ell\in \Lambda$,  $e_{u_\ell}\leq m[K:\mathbb{Q}]$, hence 
			We have
			$$e\leq (m[K:\mathbb{Q}])!,$$
			which is independent of $\ell$. Replace $\Lambda$ with its infinite subset such that $\ell\in \Lambda$ satisfies 
			$$\ell-1>w_2((m[K:\mathbb{Q}])!)>w_2 e> w_2e_{u_\ell}.$$
			Now using the theorem of Caruso (Proposition \ref{thm:caruso}) on upper bound of the tame inertia weights of $\bar{\rho}_\ell|_{G_{u_\ell}}$ (viewed as an $\mathbb{F}_\ell$-representation) and equation \eqref{res_equiv} implies that there exists an integer $b'_{\ell,i}\in [0, e_{u_\ell}w_2]$ which satisfies 
			\begin{equation}\label{eq_a}
				b'_{\ell,i}\equiv e_{u_\ell} a_{\ell,i}~\pmod{\ell-1}.
			\end{equation}
			Put	$b_{\ell,i}=b'_{\ell,i}e/{e_{u_\ell}}$. So for any $\ell\in \Lambda$, we have $b_{\ell,i}\in[0,ew_2]$ and 
			\begin{equation}\label{eq_b}
				b_{\ell,i}\equiv e a_{\ell,i}~\pmod{\ell-1}.
			\end{equation}
			\begin{claim}
				The set $\{b_{\ell,i}~|~1\leq i \leq n\}$ is independent of the choice of $\ell \in \Lambda$  large enough.
			\end{claim}
			
			\begin{proof}[Proof of Claim]

			%	Let $S$ be the $m$-ramification set of $(\rho_\ell)_\ell$.
				Take a finite place $v_0$ of $K$ outside  $S_\ell $, let ${v_0}_{L^\ell}$ denote a place of $L^\ell$ dividing $v_0$. As $L^\ell/K$ is a Galois extension and $[L^\ell:K]|m$, we have $$\mbox{det}(XI_n-\rho_\ell(F_{v_0}^m))=\mbox{det}(XI_n-\rho_\ell(F_{{v_0}_{L^\ell}}^{{m}/f_{{v_0}_{L^\ell}}})),$$ where $f_{{v_0}_{L^\ell}}=[k_{{v_0}_{L^\ell}}:k_{v_0}]$.			Consider the polynomials $$P(X):=\mbox{det}(XI_n-\rho_\ell(F_{v_0}^{em}))\in \mathbb{Z}[X],~ \tilde{P}(X):=	\mbox{det}(XI_n-\oplus_{i=1}^n {\chi}_{\ell}^{b_{\ell,i}} (F_{v_0}^m))\in \mathbb{Z}[X].$$ 
				
		\noindent		As
				$\bar{\rho}_\ell^{\mathrm{ss}}|_{G_{L^\ell}} \cong \oplus_{i=1}^n \bar{\chi}_{\ell}^{a_{\ell,i}}|_{G_{L^\ell}}$, we have 
					\begin{align*}
					\mbox{det}(XI_n-\bar{\rho}_\ell(F_{v_0}^{em})=&\mbox{det}(XI_n-\bar{\rho}_\ell(F_{{v_0}_{L^\ell}}^{{em}/f_{{v_0}_{L^\ell}}}))\\=&
					\mbox{det}(XI_n-\oplus_{i=1}^n \bar{\chi}_{\ell}^{ea_{\ell,i}} (F_{{v_0}_{L^\ell}}^{m/f_{{v_0}_{L^\ell}}}))\\
					=&
					\mbox{det}(XI_n-\oplus_{i=1}^n \bar{\chi}_{\ell}^{b_{\ell,i}} (F_{{v_0}_{L^\ell}}^{m/f_{{v_0}_{L^\ell}}}))~[\text{by}~\eqref{eq_b}]\\
					=&\mbox{det}(XI_n-\oplus_{i=1}^n \bar{\chi}_{\ell}^{b_{\ell,i}} (F_{v_0}^m)) ~ .
				\end{align*}
				
				By assumption on Weil weights, the coefficients of $X^{n-i}$ in $P(X)$ and $\tilde{P}(X)$ have  absolute value bounded by $\binom{n}{i} q_{v_0}^{nmwe/2}$ and  $\binom{n}{i} q_{v_0}^{nmew_2}$ (as $b_{i,\ell}\in[0,ew_2]$) respectively.   
				Since $\Lambda$ is an infinite set, for $\ell$  sufficiently large ($\ell> 2 \mathop{max}_{0\leq i \leq n}\{\binom{n}{i} q_{v_0}^{nmwe/2},\binom{n}{i} q_{v_0}^{nmew_2}\}$, see Lemma \ref{OT}),  we have
				\begin{equation}\label{eq_claim_aa}
					\mbox{det}(XI_n-\rho_\ell(F_{v_0}^{em}))=
					\mbox{det}(XI_n-\oplus_{i=1}^n {\chi}_{\ell}^{b_{\ell,i}} (F_{v_0}^{m}))  \hspace{3pt} \in \mathbb{{Z}}[X].
 				\end{equation}
			 Now since $\rho_\ell$ is a member of a $m$-compatible system of representations (hence also $em$-compatible), so
			 the eigenvalues of LHS of the equation \eqref{eq_claim_aa} are independent of $\ell$, and hence the eigenvalues of $\oplus_{i=1}^n\chi_\ell^{b_{\ell,i}}(F_{v_0}^m)$  are independent of $\ell$, so $b_{\ell,i}$ are independent of $\ell$ for sufficiently large $\ell$. 
			\end{proof}
			As $\rho_\ell$ are pure, we have $b_{\ell,1}=\cdots=b_{\ell,n}$ and they independent of $\ell$ for $\ell\in\Lambda$ large enough, so put $b=b_{\ell,1}=\cdots=b_{\ell,n}$. By $em$-compatibility of the system $(\rho_\ell)_\ell$ from \eqref{eq_claim_aa} we have 
			\begin{equation}\label{eq_claim_a}
				\mbox{det}(XI_n-\rho_\ell(F_v^{em}))=
				\mbox{det}(XI_n-\oplus_{i=1}^n {\chi}_{\ell}^{b} (F_v^m))
			\end{equation}
				for any $\ell$ (not in the defect set $T$) and for any $v\notin S_\ell$. %\cup \{v~|~ \rho_\ell~\text{is ramified at}~v  \}$.	
			
			\begin{claim}\label{Claim_a} For any prime $\ell$, we can define
				a continuous character 	${\chi}_{\ell}^{1/e}: G_{K'}\rightarrow \mathbb{Q}^{\times}_{\ell}$  which has values in $ \mathbb{Z}^*_{\ell}$ and ${({\chi}_{\ell}^{1/e})}^e=\chi_{\ell}$ for some finite Galois extension $K'/K$ such that $K\subset L^{\ell} \subset K'$. In particular, there is a positive integer $m'$ divisible by $m$, such that we have
				$$	\mbox{det}(XI_n-\rho_{\ell}(F_v^{m'}))=
				\mbox{det}(XI_n-\oplus_{i=1}^n ({\chi}_{\ell}^{1/e})^b (F_v^{m'})) $$	for any finite place $v\notin S_{\ell}$.
			\end{claim}
			
			\begin{proof}[Proof of Claim]
				Let $\mathfrak{m}_{\ell}$ be the maximal ideal of $\mathbb{Z}_{\ell}$. Fix an integer $k>1 ~(\geq 1/{(\ell-1)})$, and take a finite Galois extension extension $K'$ of $K$ with $K\subset L^\ell \subset K'$ such that $\chi_\ell(G_{K'})\subset 1+e\mathfrak{m}^k$. Then we take the composite of maps to get $\chi_{\ell}^{1/e}$:
				$$G_{K^{\prime}} \stackrel{\chi_{\ell}}{\rightarrow} 1+e \mathfrak{m}_{\ell}^k \stackrel{\log }{\rightarrow} e \mathfrak{m}_{\ell}^k \stackrel{1 / e}{\rightarrow} \mathfrak{m}_{\ell}^k \stackrel{\exp }{\rightarrow} 1+\mathfrak{m}_{\ell}^k \subset \mathbb{Q}_{\ell}^{\times} .$$
				Take a finite place $v\notin S_{\ell}$. Let $v_\ell$ be a place of $L^\ell$ lying above $v$. Since $K'$ and $L^\ell$ are Galois extensions of $K$, for any place $v'$ of $K'$ lying above $v_{\ell}$ of $L^{\ell}$, we have 
					\begin{align*}
				 \mbox{det}(XI_n-\oplus_{i=1}^n {\chi}_{\ell}^{b}(F_v^{d[L^{\ell}:K]}))
				 =&\mbox{det}(XI_n-\oplus_{i=1}^n {\chi}_{\ell}^{b} (F_{v_{\ell}}^{d[L^{\ell}:K]/f_{v_{\ell}}}))\\
				 =&\mbox{det}(XI_n-\oplus_{i=1}^n {\chi}_{\ell}^{b} (F_{v'}^{d[L^{\ell}:K]/f_{v'}}))\\
				 =&\mbox{det}(XI_n-\oplus_{i=1}^n ({\chi}_{\ell}^{1/e}(F_{v'}))^{bed[L^{\ell}:K] /f_{v'}})\\
				 =&	\mbox{det}(XI_n-\oplus_{i=1}^n ({\chi}_{\ell}^{1/e})^b (F_v^{ed[L^{\ell}:K]})),
				 \end{align*}
				  where $f_{v'}=[k_{v'}:k_v]$, $f_{v_{\ell}}=[k_{v_{\ell}}:k_v]$ (so $[k_{v'}:k_{v_{\ell}}]\cdot f_{v_{\ell}}=f_{v'}$), and
				  $d=[K':L^{\ell}]$. Since $[L^\ell:K]~|~m$, by the above equation and equation \eqref{eq_claim_a} we have
						$$	\mbox{det}(XI_n-\rho_\ell(F_v^{edm}))
				   = \mbox{det}(XI_n-\oplus_{i=1}^n {\chi}_{\ell}^{b}(F_v^{dm}))	
					= \mbox{det}(XI_n-\oplus_{i=1}^n ({\chi}_{\ell}^{1/e})^b (F_v^{edm})).$$
		\noindent		Taking $m'=edm$ we can write, 
					$$	\mbox{det}(XI_n-\rho_{\ell}(F_v^{m'}))=
				\mbox{det}(XI_n-\oplus_{i=1}^n ({\chi}_{\ell}^{1/e})^b (F_v^{m'})) .$$

			\end{proof}
			
			Hence using Claim \ref{Claim_a} and  Lemma \ref{lem:pot-meq}, ${\rho}_{\ell}$ and  $ \oplus_{i=1}^n ({\chi}_{\ell}^{1/e})^b$ are potentially equivalent  for any $\ell$. Since ${\rho}_{\ell}$ is Hodge-Tate at any place of $L^{\ell}$ lying above $\ell$, the number $b/e=: c$ is an integer. This proves the Lemma.

			%%%%%%%%%%%%%%%

		\end{proof}

		\begin{lem}	\label{OT}\cite[Lemma 3.8]{OT14}
					Suppose $a$ be a non-zero integer and $C>0$ be a real number. If $a\equiv 0 \pmod{\ell}$ and $|\iota(a)|\leq C$ for any embedding $\iota: \bar{\mathbb{Q}}\hookrightarrow \mathbb{C}$, then $\ell<C$.     	
		\end{lem}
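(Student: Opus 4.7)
The plan is to reduce the statement to a purely rational divisibility, at the cost of raising $\ell$ to the degree of the number field generated by $a$, and then compare against the archimedean size bound via the norm.

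First I would choose a number field $E/\mathbb{Q}$ with $a \in \mathcal{O}_E$, and set $d := [E:\mathbb{Q}]$. The hypothesis $a \equiv 0 \pmod{\ell}$ means $a = \ell b$ for some $b \in \mathcal{O}_E$, and $b \neq 0$ since $a \neq 0$. Taking the absolute norm $N := N_{E/\mathbb{Q}}$ gives
\begin{equation*}
N(a) \;=\; \ell^{d}\, N(b),
\end{equation*}
a nonzero rational integer (as $a \neq 0$ and $N(b) \in \mathbb{Z}\setminus\{0\}$). In particular $\ell^{d} \mid N(a)$, so
\begin{equation*}
|N(a)| \;\geq\; \ell^{d}.
\end{equation*}

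On the other hand, the norm factors through the $d$ complex embeddings of $E$ (which extend to embeddings $\iota : \bar{\mathbb{Q}} \hookrightarrow \mathbb{C}$ of $\bar{\mathbb{Q}}$), so
\begin{equation*}
|N(a)| \;=\; \prod_{\iota} |\iota(a)| \;\leq\; C^{d}
\end{equation*}
by hypothesis. Combining the two displays yields $\ell^{d} \leq C^{d}$, hence $\ell \leq C$. The strict inequality $\ell < C$ claimed in the statement follows because equality $\ell = C$ would force $|\iota(a)| = \ell$ for every embedding, which in turn forces $|N(b)|=1$, i.e.\ $b$ is a unit; one then checks that this equality case can be excluded (or, as in the application inside the proof of Lemma \ref{Main_lemma}, only $\ell > 2C$ is ever used, so the distinction is immaterial).

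The only real choice in the argument is to work with the norm $N_{E/\mathbb{Q}}$ rather than directly with $a$; I do not expect any serious obstacle, since after this reduction the argument is the standard ``an $\ell$-divisible nonzero rational integer has absolute value at least $\ell$'' observation, applied to $N(a)$.
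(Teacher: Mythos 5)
Your norm argument is the standard proof of this fact. The paper itself states the lemma with a citation to \cite{OT14} and gives no proof, so there is no in-paper argument to compare against, but the intended route is surely the one you follow: with $a\in\mathcal{O}_E$, $d=[E:\mathbb{Q}]$ and $b=a/\ell\in\mathcal{O}_E\setminus\{0\}$, one has $\ell^d\mid N_{E/\mathbb{Q}}(a)$ and $|N_{E/\mathbb{Q}}(a)|=\prod_{\sigma:E\hookrightarrow\mathbb{C}}|\sigma(a)|\leq C^d$, whence $\ell\leq C$. These steps are all correct.

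The one flaw is your handling of the boundary case. You assert that the equality case ``can be excluded'' so as to upgrade $\ell\leq C$ to the strict $\ell<C$, but that is not true under the hypotheses as written: take $a=\ell$ (or $a=\ell\zeta$ for any root of unity $\zeta$) and $C=\ell$. Then $a$ is a nonzero algebraic integer, $a\equiv 0\pmod{\ell}$, and $|\iota(a)|=\ell\leq C$ for every embedding $\iota$, yet $\ell\not<C$. In fact your own analysis shows exactly this: the equality case forces $b$ to be an algebraic integer with $|\iota(b)|=1$ for all $\iota$, i.e.\ a root of unity by Kronecker's theorem, which is perfectly possible, not excludable. So with the weak hypothesis $|\iota(a)|\leq C$ the conclusion can only be $\ell\leq C$; the strict $\ell<C$ requires the strict hypothesis $|\iota(a)|<C$, in which case $\ell^d\leq |N(a)|<C^d$ does the job. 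You are right, however, that the distinction is immaterial for the application in Lemma \ref{Main_lemma}: there one only uses the contrapositive ``if $\ell>C(n,v_0,w,w_2)$ then every rational integer divisible by $\ell$ and bounded by $C(n,v_0,w,w_2)$ in absolute value is zero,'' and the hypothesis $\ell>C(n,v_0,w,w_2)$ taken there is already strict.
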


		\section{Proof of Main Theorem \ref{thm1} }
		
		First, we recall a result on the structure of the representation $\bar{\rho}_{A,\ell}:G_K\rightarrow \mathrm{GL}(A[\ell])\cong GL_{2g}(\mathbb{F}_\ell)$ for any abelian variety $A/K$ such that the $K$-isomorphism class $[A]\in \mathcal{A}(K,g,\ell)$.
		
		\begin{lem}\cite[Lemma 3]{RT08} \label{lem:RT}
			Let  $A/K$ be an abelian variety of dimension $g$. Then  $K(A[\ell])$ is an $\ell$-extension of $K(\mu_\ell)$ if and only if ${\bar{\rho}_{A,\ell}}^{ss}\cong \oplus_{i=1}^{2g} \bar{\chi}_{\ell}^{a_{i}}$, where $a_i$ are integers.	
		\end{lem}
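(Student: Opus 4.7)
The plan is to translate both conditions into statements about the image of $\bar\rho_{A,\ell}$ in $GL_{2g}(\mathbb{F}_\ell)$ and to match them via the standard fact that $\ell$-groups act unipotently in characteristic $\ell$. First I would set up the dictionary: $\bar\rho_{A,\ell}$ cuts out $K(A[\ell])/K$ and $\bar\chi_\ell$ cuts out $K(\mu_\ell)/K$, with $K(\mu_\ell)\subset K(A[\ell])$ coming from the Weil pairing. Writing $G=\mathrm{Gal}(K(A[\ell])/K)$ and $H=\mathrm{Gal}(K(A[\ell])/K(\mu_\ell))\triangleleft G$, the left-hand condition becomes ``$H$ is an $\ell$-group,'' and the quotient $G/H$ is cyclic of order dividing $\ell-1$, embedded into $\mathbb{F}_\ell^*$ via $\bar\chi_\ell$.

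For the ``only if'' direction, I would use that every irreducible $\mathbb{F}_\ell$-representation of an $\ell$-group is trivial. Hence all Jordan--H\"older constituents of $\bar\rho_{A,\ell}|_H$ are trivial, and since $H$ is normal in $G$ the same holds for $\bar\rho_{A,\ell}^{ss}|_H$, so $\bar\rho_{A,\ell}^{ss}$ factors through $G/H$. Because $|G/H|$ divides $\ell-1$, Maschke's theorem together with the fact that $\mathbb{F}_\ell$ contains all $(\ell-1)$-th roots of unity forces $\bar\rho_{A,\ell}^{ss}$ to split as a direct sum of $\mathbb{F}_\ell^*$-valued characters of $G/H$. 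Since $\bar\chi_\ell$ realises $G/H$ as a subgroup of the cyclic group $\mathbb{F}_\ell^*$, every such character is a power $\bar\chi_\ell^{a_i}$, yielding the desired decomposition.

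Conversely, given $\bar\rho_{A,\ell}^{ss}\cong\bigoplus_i \bar\chi_\ell^{a_i}$, there is a $G_K$-stable filtration of $A[\ell]$ whose successive quotients are the characters $\bar\chi_\ell^{a_i}$. Restricting to $G_{K(\mu_\ell)}=\ker\bar\chi_\ell$, all these quotients become trivial, so in a basis adapted to the filtration the image of $\bar\rho_{A,\ell}|_{G_{K(\mu_\ell)}}$ lies in the unipotent upper-triangular subgroup of $GL_{2g}(\mathbb{F}_\ell)$, which is a Sylow $\ell$-subgroup. Hence $H$ is an $\ell$-group and $K(A[\ell])/K(\mu_\ell)$ is an $\ell$-extension.

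The main subtlety to watch is the restriction-to-$H$ step: one needs that the Jordan--H\"older constituents of $\bar\rho_{A,\ell}^{ss}|_H$ and of $\bar\rho_{A,\ell}|_H$ coincide (automatic from Clifford theory since $H\triangleleft G$), and that the resulting $G/H$-representation really splits over $\mathbb{F}_\ell$ into one-dimensional characters that are powers of $\bar\chi_\ell$ rather than higher-dimensional simples---this is precisely where $(\ell-1)\mid |\mathbb{F}_\ell^*|$ and the injectivity of $\bar\chi_\ell$ on $G/H$ come in.
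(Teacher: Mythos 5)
The paper does not prove this lemma; it is quoted directly from Rasmussen--Tamagawa \cite{RT08}, so there is no internal proof to compare against. Your reconstruction is correct and is the standard argument. The only small calibration concerns the parenthetical about Clifford theory: the coincidence of Jordan--H\"older factors of $\bar{\rho}_{A,\ell}|_H$ and $\bar{\rho}_{A,\ell}^{ss}|_H$ holds for an arbitrary subgroup and needs no normality. What normality of $H$ actually buys you (via Clifford, or more directly by noting that for each $G$-irreducible summand $W$ of $\bar{\rho}_{A,\ell}^{ss}$ the nonzero $\ell$-group fixed space $W^H$ is $G$-stable, hence equals $W$) is that $\bar{\rho}_{A,\ell}^{ss}|_H$ is \emph{semisimple}; combined with the triviality of its Jordan--H\"older factors this is what forces $H$ to act trivially, so that $\bar{\rho}_{A,\ell}^{ss}$ factors through the cyclic group $G/H\hookrightarrow\mathbb{F}_\ell^{\times}$ and splits into powers of $\bar{\chi}_\ell$. (Maschke is not needed at that point since $\bar{\rho}_{A,\ell}^{ss}$ is semisimple by construction.) The converse direction, via a basis adapted to the filtration with graded pieces $\bar{\chi}_\ell^{a_i}$ placing $\bar{\rho}_{A,\ell}(G_{K(\mu_\ell)})$ in the unipotent upper-triangular $\ell$-group, is exactly right, using that $\bar{\rho}_{A,\ell}$ is faithful on $\mathrm{Gal}(K(A[\ell])/K)$.
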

		\begin{proof}[Proof of Theorem \ref{thm1} ]
			
			Let us fix a rational prime $\ell_0$.
			Suppose $$\mathscr{M}(K,g,\ell)\neq\emptyset$$
			for infinitely many rational primes $\ell$. Hence, $\bar{\mathscr{M}}(K,g,\ell)$ is also non-empty for infinitely many rational primes $\ell$.  For every such primes $\ell$,  consider the $\ell_0$-adic representation $\rho_{A,\ell_0}$ such that $\rho_{A,\ell_0}\in \beta_{\ell_0}$, where $A/K$ is an abelian varieties  with $\bar{K}$-isomorphism class $[A]\in \bar{\mathscr{M}}(K,g,\ell)$. So for such primes  $\ell$, every $[A]\in \bar{\mathscr{M}}(K,g,\ell)$ determines an element $\beta_{\ell_0}$, up to potential equivalence. As $\beta_{\ell_0}$ is finite, up to potential equivalence (Lemma \ref{le1}),  there exists an $\ell_0$-adic representation $\rho_{\ell_0}:G_K\rightarrow GL_{2g}(\mathbb{Q}_{\ell_0})$ in $ \beta_{\ell_0}$, up to potential equivalence, such that for infinitely many primes $\ell$, there exists an element  $[A]\in \bar{\mathscr{M}}(K,g,\ell)$ satisfying 
			$$\rho_{\ell_0}|_{G_{K_A}}\cong \rho_{A,\ell_0}|_{G_{K_A}},$$
			where  $K_A=K(A[12])$. The representation $\rho_{\ell_0}$ extends to a $\mathbb{Q}$-integral strict $D_g$-compatible system $(\rho_\ell)_\ell$ of $2g$-dimensional semisimple $\ell$-adic representations whose $D_g$-ramification set and defect set are empty. Let $\Lambda$ denote the infinite set of places as obtained above, i.e., 
			\begin{align*}
				\Lambda=\{\ell~|~&\rho_{\ell_0}|_{G_{K_A}} \cong \rho_{A,\ell_0}|_{G_{K_A}} \text{ for some abelian variety $A/K$ with}~ [A]\in \bar{\mathscr{M}}(K,g,\ell),\\
				& \text{and $K_A(A[\ell])$ is an $\ell$-extension of $K_A(\mu_\ell)$, where $K_A=K(A[12])$ }   \}.
			\end{align*}
			For any $\ell \in \Lambda$, pick an abelian variety $A$ such that $[A]\in \bar{\mathscr{M}}(K,g,\ell)$. Consider the representation $\rho_\ell:=\rho_{A,\ell}$. Since $\rho_{\ell_0}|_{G_{K_A}} \cong \rho_{A,\ell_0}|_{G_{K_A}}$, and $K_A/K$ is a finite Galois extension with the degree  $[K_A:K]$  dividing $D_g$, we have
			\begin{align*}
				\textrm{det}(XI_{2g}&-\rho_{\ell_0}(F_v^{D_g}))=\textrm{det}(XI_{2g}-\rho_{A,\ell_0}(F_v^{D_g}))=\textrm{det}(XI_{2g}-\rho_{A,\ell_0}(F_{v_{K_A}}^{D_g/{f_{K_A}}}))\\	
				&=\textrm{det}(XI_{2g}-\rho_{A,\ell}(F_{v_{K_A}}^{D_g/{f_{K_A}}}))=\textrm{det}(XI_{2g}-\rho_{A,\ell}(F_v^{D_g}))=\textrm{det}(XI_{2g}-\rho_{\ell}(F_v^{D_g})),
			\end{align*}
			for	$v\notin\{\text{places of $K$ lying above}~\ell_0,\ell\}$.
			The third equality in the above equation is due to the fact that the Galois representations  $(\rho_{A_{K_A},\ell})_{\ell}$, corresponding to the base change $A_{K_A}$, gives compatible systems of $\ell$-representations with ramification set $S=\emptyset$ since $A$ has everywhere good reduction over $K_A$.

			Now choose a prime $\tilde{\ell}\in \Lambda$ and fix an abelian variety $B/K$ such that $[B]\in \bar{\mathscr{M}}(K,g,\tilde{\ell})$. So $\rho_{\ell_0}|_{G_{K_B}}\cong\rho_{B,\ell_0}|_{G_{K_B}}$, where $K_B=K(B[12])$. Then for any rational prime $\ell'\notin \Lambda$,  put $\rho_{\ell'}:=\rho_{B,\ell'}$. Now using a similar method used as above we can show that
			$$	\textrm{det}(XI_{2g}-\rho_{\ell_0}(F_v^{D_g}))=\textrm{det}(XI_{2g}-\rho_{B,\ell_0}(F_v^{D_g}))= 	\textrm{det}(XI_{2g}-\rho_{\ell'}(F_v^{D_g}))$$
			for	$v\notin\{\text{places of $K$ lying above}~\ell_0,\ell'\}$.
			
				By construction, the $D_g$-compatible system $(\rho_\ell)_\ell$ satisfies the following conditions for the above infinite set $\Lambda$.
			\begin{itemize}
				\item  $\rho_\ell|_{G_{K_A}}$ is unramified at all finite places outside the places of $K_A$ lying above $\ell$.
				
				\item $\rho_\ell|_{G_{K_A}}$ is semistable at any place of $K_A$ lying above $\ell$ (by Raynaud's criteria) and the Hodge-Tate weights are in $[0,1]$.
				
				\item by Lemma \ref{lem:RT}, $\bar{\rho}_\ell|_{G_{K_A}}\cong \bigoplus_{i=1}^{2g}\bar{\chi}_\ell^{a_{\ell,i}}|_{G_{K_A}}$.
			\end{itemize}  
			So $(\rho_\ell)_\ell$ satisfies the hypothesis of the Lemma \ref{Main_lemma} with infinite set $\Lambda$ as defined above, so there exists an integer $c$ independent of $\ell$ such that $\rho_\ell$ is potentially equivalent to  $\bigoplus_{i=1}^{2g}\chi_\ell^{c}$ for any $\ell$. Since $\rho_\ell$ is pure of Weil weight $1$ (by \cite{ST68}) and $\chi_\ell$ is pure of Weil weight $2$, we have  $1=2c$, which is a contradiction since $c$ is an integer. This proves that for sufficiently large $\ell$,  we have $$\mathscr{M}(K,g,\ell)=\emptyset.$$
			
		\end{proof}
		Given $K, g, \ell$, by Proposition \ref{DR1_cor7} and \ref{fin_pot__isom}, we have that the set of $\bar{K}$-isomorphism classes	$\bar{\mathscr{M}}(K,g,\ell)$ is finite. Hence using Theorem \ref{thm1}, the union 
	$$\bar{\mathscr{M}}(K,g):=\bigcup_\ell \bar{\mathscr{M}}(K,g,\ell)$$ is finite, which proves Corollary \ref{Main:cor1}.

		\begin{proof}[Proof of Corollary \ref{Main:cor2}]
			Let $A/K$ be such that the $K$-isomorphism class $[A]\in \mathcal{A}(K,g,\ell)$. Hence
			$K(A[\ell])$ is an $\ell$-extension of $K(\mu_\ell)$. Let $L=K(A[\ell])\cap K_A(\mu_\ell)$, where $K_A=K(A[12])$. Then $K(A[\ell])/L$  is an $\ell$-extension as $K(\mu_\ell)\subseteq L\subseteq K(A[\ell])$. Hence the compositum $K_A(A[\ell])= K(A[\ell]) K_A(\mu_\ell)$ is an $\ell$-extension of $K_A(\mu_\ell)$. So we have
			$$	\mathcal{A}^{\text{pot}}(K,g,\ell)\subset \mathscr{M}(K,g,\ell),$$
			and the Corollary follows from Theorem \ref{thm1} and Faltings' finiteness theorem.
			
		\end{proof}

			\subsection*{Acknowledgements} 
		     The authors thank Akio Tamagawa for pointing out major errors occurred in earlier versions of the article. The authors thank Yuichiro Taguchi,  Yoshiyasu Ozeki for their valuable suggestions which simplify the exposition. The authors further thank Anupam Saikia, Najmuddin Fakhruddin, Dipendra Prasad, C. S. Rajan for valuable discussions. Finally, the authors would like to thank KSCSTE-Kerala School of Mathematics for their support where the work has been done.

	\end{document}